\newtheorem{theorem}{Theorem}[section]
\newtheorem{lemma}[theorem]{Lemma}
\newtheorem{proposition}[theorem]{Proposition}
\theoremstyle{definition}
\newtheorem{definition}[theorem]{Definition}
\newtheorem{example}[theorem]{Example}
\newtheorem{question}[theorem]{Question}
\theoremstyle{remark}
\numberwithin{equation}{section}
\DeclareMathOperator{\diam}{diam}
\DeclareMathOperator{\dist}{dist}
\DeclareMathOperator{\id}{id}
\newcommand{\abs}[1]{\lvert #1 \rvert}
\newcommand{\inn}[1]{\langle #1 \rangle}
\newcommand{\R}{\mathbb{R}}
\newcommand{\C}{\mathbb{C}}
\newcommand{\be}[1]{\begin{equation}\label{#1}}
\newcommand{\ee}{\end{equation}}
\title{Bi-Lipschitz embedding of projective metrics}
\author{Leonid V. Kovalev}
\address{215 Carnegie, Mathematics Department, Syracuse University, Syracuse, NY 13244}
\email{lvkovale@syr.edu}
\thanks{Supported by the NSF grant DMS-0968756.}
\subjclass[2010]{Primary 30L05; Secondary 30C65, 51M10}
\keywords{bi-Lipschitz embedding, projective metric, quasisymmetric map}
\begin{document}

\begin{abstract}
We give a sufficient condition for a projective metric on a subset of a Euclidean space to admit a  bi-Lipschitz embedding into Euclidean space of the same dimension. 
\end{abstract}

\maketitle

\section{Introduction}\label{intro}

A metric $d$ on a convex domain $\Omega\subseteq \mathbb R^n$ is called \emph{projective} (sometimes \emph{Desarguesian}) provided that  
the equality $d(x,z)+d(z,y)=d(x,y)$ holds if and only if $z$ is a convex combination of $x$ and $y$.  
Equivalently, a metric is projective if line segments are unique geodesics.  Two well-known classes of such metrics are strictly convex normed spaces and Hilbert geometries on convex sets~\cite{MR0054980}. A different, integral-geometric construction of projective metrics was introduced by Busemann \cite{MR0143155}.  Let $\mathcal{H}$ be the set of all hyperplanes, i.e., $(n-1)$-dimensional affine subspaces, in $\R^n$. For a set $E\subseteq \R^n$ denote by $\pi E\subseteq \mathcal{H}$   the set of all hyperplanes that intersect $E$. Throughout the paper  $n\ge 2$.

\begin{definition}\label{Busmet} Let  $\Omega\subseteq \mathbb R^n$, $n\ge 2$, be a convex domain. Suppose $\nu$ is a positive Borel measure on $\mathcal{H}$ such that 
\begin{itemize}
\item $\nu(\pi E)=0$ when $E$ is a one-point subset of $\Omega$; 
\item $\nu(\pi E)>0$ when $E$ is a   line segment in $\Omega$; 
\item $\nu(\pi E)<\infty$ for every compact set $E$ contained in $\Omega$. 
\end{itemize} 
Then 
\be{metric1}
d_\nu(x,y) = \nu(\pi [x,y])
\ee
is a \emph{Busemann-type projective metric} on $\Omega$. 
\end{definition}

 The fact that $d_\nu$ is a projective metric is immediate from the definition. In the converse direction,
Pogorelov~\cite{MR550440}, Ambartzumian~\cite{MR0372939} and Alexander \cite{MR490820} 
showed that every projective metric on $\mathbb R^2$ arises from Busemann's construction. See~\cites{MR2027175,MR0430935,Papa} for historical overview and other results towards Hilbert's 4th problem, which asks for a characterization of projective metrics.  

The fact that the Euclidean metric on $\R^n$ can be constructed as in Definition~\ref{Busmet} is a consequence of the classical Crofton formula (e.g., ~\cite{MR2455326}). We write $d_e$ for the Euclidean metric.   The main result of this 
paper is the following sufficient condition for $(\Omega,d_\nu)$ to admit a bi-Lipschitz embedding into $\R^n$.

\begin{theorem}\label{qsthm}  In the notation of Definition~\ref{Busmet}, suppose that  the identity map $\id \colon (\Omega, d_\nu) \to (\Omega,d_e)$ is 
locally $\eta$-quasisymmetric. Then $(\Omega, d_\nu)$ is bi-Lipschitz equivalent to $(\Omega',d_e)$ for some domain $\Omega'\subseteq \R^n$. Furthermore, if $\Omega=\R^n$, then $\Omega'=\R^n$. 
\end{theorem}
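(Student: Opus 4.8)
The plan is to exhibit the bi-Lipschitz embedding explicitly as the gradient of a convex potential manufactured from the measure $\nu$, and to use the quasisymmetry hypothesis only to guarantee a single non-degeneracy estimate. Parametrize $\mathcal{H}$ by a unit normal $\theta\in S^{n-1}$ (with $\theta\sim-\theta$) and signed distance $t\in\R$, and disintegrate $\nu=\int\nu_\theta\,d\rho(\theta)$ along the normal direction. Since the hyperplane $(\theta,t)$ meets $[x,y]$ exactly when $t$ lies between $\inn{x,\theta}$ and $\inn{y,\theta}$, writing $\Phi(\theta,\cdot)$ for a cumulative distribution function of $\nu_\theta$ gives the exact identity $d_\nu(x,y)=\int\abs{\Phi(\theta,\inn{y,\theta})-\Phi(\theta,\inn{x,\theta})}\,d\rho(\theta)$. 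I would then introduce $u(x)=\int\Psi(\theta,\inn{x,\theta})\,d\rho(\theta)$ with $\partial_t\Psi=\Phi$; each slice $t\mapsto\Psi(\theta,t)$ is convex, so $u$ is convex and the candidate embedding is $g:=\nabla u$, given (with a fixed orientation convention) by $g(x)=\int\Phi(\theta,\inn{x,\theta})\,\theta\,d\rho(\theta)$. The Euclidean model $d_\nu=d_e$ returns $u(x)=c\abs{x}^2$ and $g(x)=cx$, a reassuring sanity check.

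The upper Lipschitz bound is then automatic and uses nothing: as $\Phi(\theta,\cdot)$ is nondecreasing, the increment $\Phi(\theta,\inn{y,\theta})-\Phi(\theta,\inn{x,\theta})$ carries the sign of $\inn{y-x,\theta}$, so with $v=(y-x)/\abs{y-x}$ one gets $\abs{g(y)-g(x)}\le d_\nu(x,y)$ and the key identity $\inn{g(y)-g(x),v}=\int\abs{\Phi(\theta,\inn{y,\theta})-\Phi(\theta,\inn{x,\theta})}\,\abs{\inn{\theta,v}}\,d\rho(\theta)$. The theorem therefore reduces to the lower bound $\inn{g(y)-g(x),v}\ge c\,d_\nu(x,y)$, i.e. to showing that a definite proportion of the mass $\nu(\pi[x,y])$ is carried by hyperplanes whose normal makes a definite angle with the chord.

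This is where I expect the only real work, and the place where quasisymmetry enters. The hyperplanes meeting $[x,y]$ with $\abs{\inn{\theta,v}}<\epsilon$ all pass within distance $\epsilon\abs{x-y}/2$ of the midpoint $m$, so their combined $\nu$-mass is at most $\nu(\pi B(m,\epsilon\abs{x-y}/2))$. Local $\eta$-quasisymmetry of $\id$ makes the gauge $r\mapsto\nu(\pi B(m,r))$ behave like the $d_\nu$-size of $B(m,r)$ and forces $\nu(\pi B(m,\epsilon\abs{x-y}/2))\le\tfrac12 d_\nu(x,y)$ once $\epsilon=\epsilon(\eta)$ is small; the complementary mass, supported on directions with $\abs{\inn{\theta,v}}\ge\epsilon$, then yields $\inn{g(y)-g(x),v}\ge\tfrac{\epsilon}{2}d_\nu(x,y)$. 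Infinitesimally this is the statement that the tangent Minkowski norms $F_p(w)=\lim_{t\to0^+}t^{-1}d_\nu(p,p+tw)$ have eccentricity bounded solely in terms of $\eta$, which is where the hypothesis is used most transparently. The main obstacle is to make this slab estimate uniform: since quasisymmetry is assumed only locally, I anticipate having to compare the scales $\epsilon\abs{x-y}$ and $\abs{x-y}$ through a chaining argument along the segment (upgrading local to global control on geodesics), and to rule out a singular part of $\nu$ along lines that the infinitesimal computation would miss.

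Finally, the lower bound gives $D^2u\succeq c\lambda\, I$ for a positive weight $\lambda$, so $u$ is strictly convex and $g=\nabla u$ is injective; thus $g$ is a bi-Lipschitz homeomorphism onto the domain $\Omega'=g(\Omega)$, which is open by invariance of domain. If $\Omega=\R^n$, then $(\R^n,d_\nu)$ is complete and unbounded (another consequence of quasisymmetry), so $\Omega'$ is both open and closed in $\R^n$ and hence equals $\R^n$.
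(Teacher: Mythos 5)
Your construction is the paper's construction in disguise: after disintegrating $\nu$ over normal directions, your map $g(x)=\int\Phi(\theta,\inn{x,\theta})\,\theta\,d\rho(\theta)$ is exactly the paper's $f_\nu(x)=\int_{\pi[o,x]}n(H)\,d\nu(H)$ up to an additive constant, your key identity is \eqref{lemqc3} (since $\abs{\inn{\theta,v}}=\sin\alpha(x-y,H)$), your Lipschitz bound is \eqref{Lip1}, and your ``slab estimate'' is precisely Proposition~\ref{idprop} (quasisymmetry implies uniform transversality, Definition~\ref{utdef}). Two details in that part are asserted rather than proved, though both are fixable: (a) the claim that the gauge $r\mapsto\nu(\pi B(m,r))$ ``behaves like'' $\diam_\nu B(m,r)$ is not a consequence of quasisymmetry alone --- quasisymmetry controls $d_\nu$-diameters, not the $\nu$-mass of crossing hyperplanes --- and requires the paper's counting trick: $\nu$-a.e.\ hyperplane meeting a cube separates two of its $2^n$ vertices, whence $\nu(\pi Q)\le 4^n\diam_\nu Q$ (this is the engine of Lemma~\ref{cube} and Proposition~\ref{idprop}); (b) the local-to-global issue you propose to handle by chaining is dispatched more simply by the paper's observation that both sides of the transversality inequality \eqref{mt0} are additive under subdivision of $[x,y]$, so it suffices to treat short segments, where local quasisymmetry applies.

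The genuine gap is the surjectivity step when $\Omega=\R^n$. You assert that $(\R^n,d_\nu)$ is complete and unbounded ``(another consequence of quasisymmetry)'' and deduce that $\Omega'$ is closed. That parenthetical claim carries the entire weight of the statement: once $g$ is known to be bi-Lipschitz from $(\R^n,d_\nu)$ onto $(\Omega',d_e)$, completeness of $(\R^n,d_\nu)$ is \emph{equivalent} to $\Omega'$ being closed, i.e.\ to the surjectivity you are trying to prove; equivalently it amounts to $\nu(\pi R)=\infty$ for every ray $R$, which nothing in your sketch establishes. Nor is it a soft consequence of local quasisymmetry: local $\eta$-quasisymmetry is a small-scale hypothesis and by itself does not prevent the metric from compressing infinity. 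Indeed, in the one-dimensional analogue (where the theorem is not claimed), $d_\nu(x,y)=\abs{\arctan x-\arctan y}$ makes the identity locally $\eta$-quasisymmetric with a uniform $\eta$, yet $f_\nu=\arctan$ maps $\R$ onto a bounded interval; ruling this out for $n\ge 2$ is a global rigidity phenomenon of quasiconformal theory. The paper supplies exactly this: by Proposition~\ref{mtlem}, $f_\nu$ is $\delta$-monotone, hence locally quasisymmetric on $(\Omega,d_e)$, hence quasiconformal, and $\R^n$ ($n\ge2$) admits no quasiconformal homeomorphism onto a proper subdomain \cite{MR0454009}*{Theorem 17.4}. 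You must either invoke such a result (or an equivalent modulus/degree argument) or prove directly that uniform transversality forces $d_\nu(0,x)\to\infty$ as $\abs{x}\to\infty$; neither is in your proposal. (A minor point in the same paragraph: ``$D^2u\succeq c\lambda I$'' is not meaningful since $u$ need not be twice differentiable, but it is also unnecessary --- injectivity follows at once from strict monotonicity, $\inn{g(x)-g(y),x-y}\ge c\,d_\nu(x,y)\abs{x-y}>0$ for $x\ne y$, and openness of the image from invariance of domain, as you say.)
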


The assumption of Theorem~\ref{qsthm} is that there exists a modulus of quasisymmetry $\eta$ (see Definition~\ref{qsdef}) such that every point of $(\Omega,d_\nu)$ has a neighborhood in which the identity map is $\eta$-quasisymmetric. 
This is a weaker assumption than $\id$ being quasisymmetric in $\Omega$. Section~\ref{transverse} presents a more precise version of Theorem~\ref{qsthm}, namely Theorem~\ref{precise}. 

Theorem~\ref{qsthm} highlights the difference between the Busemann construction (placing a weight on 
the space of hyperplanes) and the conformal deformation (placing a weight on the Euclidean space itself). For the latter, the analog of Theorem~\ref{qsthm} fails, as was demonstrated by Semmes~\cite{MR1402671} in dimensions $n\ge 3$ and by Laakso~\cite{MR1924353} in dimension $n=2$.
In particular, Laakso constructed a nonsmooth conformal deformation of $\mathbb R^2$ such that the  resulting space is not bi-Lipschitz embeddable into any uniformly convex Banach space, despite the 
identity map to $(\mathbb R^2, d_e)$ being quasisymmetric.   

After the definitions and preliminary results are collected in Section~\ref{prel}, the proof of Theorem~\ref{qsthm} is given in \S\ref{transverse}. 
It employs a  construction of  quasiconformal maps that simultaneously extends two previously known approaches~\cites{MR0086869,MR2328811}; this connection is discussed in \S\ref{examples}. 
The concluding Section~\ref{remarks} presents some open problems.

\section{Preliminaries}\label{prel} 

Let $\Omega$ and $\nu$ be as in Definition~\ref{Busmet}.  
Fix a point $o\in\Omega$. 
For a hyperplane $H\in\mathcal{H}$  that does not pass through $o$, let 
$n(H)$ be the unit normal vector to $H$ that points out of the halfspace containing $o$. 
Define 
\be{map1}
f_\nu(x) = \int_{\pi[o,x]} n(H)\,d\nu(H),\quad x\in \Omega.
\ee
The choice of basepoint $o$ is immaterial: it contributes only an additive constant to $f_\nu$ (see the proof of Lemma~\ref{basicdist} below). Note that $f_\nu(o)=0$.

Given a nonzero vector $v$ and a hyperplane $H \in\mathcal{H}$, let $\alpha(v,H)\in [0,\pi/2]$ be the smaller angle between $H$ and the line determined by $v$. E.g., $\alpha(v,H)=\pi/2$ when $v$ is orthogonal to $H$. This notation will be used often in the sequel. 

\begin{lemma}\label{basicdist} For all $x,y\in \Omega$ 
\be{Lip1}
\abs{f_\nu(x)-f_\nu(y)}\le \nu(\pi[x,y])
\ee
and 
\be{lemqc3}
\inn{f_\nu(x)-f_\nu(y),x-y} =\abs{x-y}  \int_{\pi[x,y] } \sin \alpha(x-y, H) \,d\nu(H)
\ee
\end{lemma}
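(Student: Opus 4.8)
The plan is to derive a single basepoint-free formula from which both \eqref{Lip1} and \eqref{lemqc3} follow at once. For a hyperplane $H\in\mathcal H$ not passing through $o$, the condition $H\in\pi[o,x]$ is equivalent to $H$ separating $o$ from $x$, because $n(H)$ points out of the halfspace containing $o$. Writing $f_\nu(x)=\int_{\mathcal H}\mathbf 1[H\text{ separates }o,x]\,n(H)\,d\nu(H)$ and subtracting the corresponding expression for $y$, the integrand is supported exactly on the symmetric difference of the two separation events, i.e.\ on the set of $H$ that separate $x$ from $y$. By the first bullet of Definition~\ref{Busmet} the hyperplanes through $x$ or $y$ are $\nu$-null, so up to a null set this support equals $\pi[x,y]$, which has finite measure by the compactness bullet.

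Next I would check the orientation on each $H\in\pi[x,y]$. Comparing which of $x,y$ lies on the side that $n(H)$ points toward, the signed contribution $\pm n(H)$ is $+n(H)$ when $x$ is separated from $o$ and $-n(H)$ when $y$ is; in both cases this equals the unit normal $\mathbf n(H)$ pointing out of the halfspace containing $y$ (i.e.\ toward $x$). Hence
\begin{equation}
f_\nu(x)-f_\nu(y)=\int_{\pi[x,y]}\mathbf n(H)\,d\nu(H),
\end{equation}
a formula manifestly independent of $o$, which also justifies the remark following \eqref{map1}.

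From here \eqref{Lip1} is just the triangle inequality for vector-valued integrals, since $\abs{\mathbf n(H)}=1$ gives $\abs{f_\nu(x)-f_\nu(y)}\le\int_{\pi[x,y]}\abs{\mathbf n(H)}\,d\nu(H)=\nu(\pi[x,y])$. For \eqref{lemqc3} I would pair the displayed formula with $x-y$ and compute the scalar integrand $\inn{\mathbf n(H),x-y}$. Because $\mathbf n(H)$ is a unit normal to $H$ with $x$ on the side it points to and $y$ opposite, this projection is nonnegative and equals $\abs{x-y}\cos\theta$, where $\theta$ is the angle between the vectors $x-y$ and $\mathbf n(H)$. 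Since the normal direction makes angle $\pi/2-\alpha(x-y,H)$ with the line through $x-y$, and the orientation forces $\cos\theta\ge 0$, we get $\inn{\mathbf n(H),x-y}=\abs{x-y}\sin\alpha(x-y,H)$; integrating yields \eqref{lemqc3}.

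The substantive content is concentrated in the first two steps; what remains is bookkeeping. The one point to get right is the sign analysis showing that $\pm n(H)$ consistently produces the $y$-to-$x$ normal in every configuration of the triangle $oxy$, as this is precisely what makes the difference $f_\nu(x)-f_\nu(y)$ localize cleanly to $\pi[x,y]$ and cancel elsewhere. The measure-theoretic claims—finiteness of $\nu(\pi[x,y])$ and nullity of the degenerate hyperplanes through $x$ or $y$—are immediate from the hypotheses of Definition~\ref{Busmet}, so I expect the orientation check to be the main obstacle.
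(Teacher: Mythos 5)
Your proof is correct and takes essentially the same route as the paper: both reduce $f_\nu(x)-f_\nu(y)$ to an integral over the symmetric difference of $\pi[o,x]$ and $\pi[o,y]$, identify that set with $\pi[x,y]$ up to a $\nu$-null set (giving \eqref{Lip1}), and then run the same sign analysis of $\inn{n(H),x-y}$ together with the identity $\abs{\inn{n(H),x-y}}=\abs{x-y}\sin\alpha(x-y,H)$ to get \eqref{lemqc3}. The only (cosmetic) difference is that you absorb the signs into a reoriented normal $\mathbf n(H)$ pointing away from $y$, which makes the basepoint-free formula explicit where the paper leaves it implicit.
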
 

\begin{proof} For $x,y\in \Omega$ we have 
\be{lemqc1}
f_\nu(x)-f_\nu(y) = \int_{\pi[0,x]\setminus \pi[0,y]} n(H)\,d\nu(H)
-\int_{\pi[0,y]\setminus \pi[0,x]} n(H)\,d\nu(H).
\ee
Since the symmetric difference of  $\pi[0,x]$ and $\pi[0,y]$ agrees with $\pi[x,y]$ up to a $\nu$-null set, ~\eqref{Lip1} follows. 

When $H\in \pi[0,x]\setminus \pi[0,y]$, the inner product $\inn{n(H),x-y}$ is positive. 
When $H\in \pi[0,y]\setminus \pi[0,x]$, this inner product is negative. Therefore, taking the inner product of both sides in \eqref{lemqc1} with $x-y$ yields  
\[
\begin{split}
\inn{f_\nu(x)-f_\nu(y),x-y} &= \int_{\pi[x,y] } \abs{\inn{n(H),x-y}} \,d\nu(H) 
\\ & =\abs{x-y}  \int_{\pi[x,y] } \sin \alpha(x-y, H) \,d\nu(H)
\end{split}
\]
proving~\eqref{lemqc3}. In particular, 
\be{lemqc3a}
\abs{f_\nu(x)-f_\nu(y)} \ge \int_{\pi[x,y] } \sin \alpha(x-y, H) \,d\nu(H). \qedhere
\ee
\end{proof} 

According to Lemma~\ref{basicdist},  $f_\nu$ is an injective $1$-Lipschitz map from $(\Omega,d_\nu)$ to $\R^n$. In general, it is not bi-Lipschitz. However, it satisfies a weaker noncollapsing property.

\begin{lemma}\label{cube} There is a constant $c=c(n)>0$ such that for every cube $Q\subset \Omega$ 
\be{cube0}
\diam f_\nu(Q) \ge c \, \diam_{\nu} Q 
\ee
where $\diam_\nu$ is the diameter with respect to the metric $d_\nu$. 
\end{lemma}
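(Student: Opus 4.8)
The plan is to produce two complementary lower bounds for $\diam f_\nu(Q)$ and to combine them through a dichotomy on the transversality angle. Fix the orthonormal edge directions $e_1,\dots,e_n$ of $Q$, write $N=N(H)$ for the unit normal of a hyperplane $H$ and $N_i=\inn{N,e_i}$, and recall the elementary identity $\sin\alpha(e_i,H)=\abs{N_i}$, so that $\sum_{i=1}^n\sin^2\alpha(e_i,H)=1$. Let $\mathcal A$ denote $(n-1)$-dimensional Hausdorff measure, so $\mathcal A(H\cap Q)$ is the area of the cross-section cut from $Q$ by $H$. As a first reduction, I would choose $x,y\in Q$ with $\nu(\pi[x,y])\ge\tfrac12\diam_\nu Q$ and extend the segment to a maximal chord of $Q$. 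Since $E\mapsto\nu(\pi E)$ is monotone under inclusion this only increases the measure, and every maximal chord has length at least the width $s$ of $Q$; so I may assume $[x,y]$ is a chord of length $\ge s$, with direction $u$, still satisfying $\nu(\pi[x,y])\ge\tfrac12\diam_\nu Q$.

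The first bound is immediate from \eqref{lemqc3a}, namely
\[
\diam f_\nu(Q)\ \ge\ \abs{f_\nu(x)-f_\nu(y)}\ \ge\ \int_{\pi[x,y]}\sin\alpha(u,H)\,d\nu(H);
\]
it controls hyperplanes transverse to $u$. The second, \emph{averaging}, bound comes from foliating $Q$, for each fixed $i$, by the chords $\sigma^{(i)}_w$ parallel to $e_i$, with $w$ ranging over the projection $Q_i'=P_i(Q)$ onto $e_i^\perp$ (an $(n-1)$-cube of measure $s^{n-1}$). Writing $g_i=\inn{f_\nu,e_i}$ and applying \eqref{lemqc3} to the endpoints of $\sigma^{(i)}_w$ gives the increment $\int_{\pi[\sigma^{(i)}_w]}\sin\alpha(e_i,H)\,d\nu$. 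Averaging over $w$ and applying Fubini (using $w\in W^{(i)}_H\iff H\in\pi[\sigma^{(i)}_w]$, where $W^{(i)}_H=P_i(H\cap Q)$), together with the projection identity $\lambda(W^{(i)}_H)=\abs{N_i}\,\mathcal A(H\cap Q)$, yields $\operatorname{osc}_Q g_i\ge s^{1-n}\int_{\mathcal H}N_i^2\,\mathcal A(H\cap Q)\,d\nu(H)$. Since $\operatorname{osc}_Q g_i\le\diam f_\nu(Q)$ for every $i$, summing and using $\sum_i N_i^2=1$ gives the clean telescoped bound
\[
\diam f_\nu(Q)\ \ge\ \frac{1}{n\,s^{n-1}}\int_{\mathcal H}\mathcal A(H\cap Q)\,d\nu(H).
\]

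To combine them I would fix a dimensional threshold $\delta=\delta(n)$ and split $\diam_\nu Q\le 2\nu(\pi[x,y])$ according to whether $\sin\alpha(u,H)\ge\delta$ or $\sin\alpha(u,H)<\delta$. On the first set $\mathbf 1\le\delta^{-1}\sin\alpha(u,H)$, so that portion is at most $\delta^{-1}$ times the first bound. On the second set I would invoke the key geometric fact that a hyperplane meeting the chord $[x,y]$ while being nearly parallel to it cannot merely clip a corner: there exist $\delta(n),c_1(n)>0$ so that $H\cap[x,y]\ne\emptyset$ and $\sin\alpha(u,H)<\delta$ force $\mathcal A(H\cap Q)\ge c_1 s^{n-1}$. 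Granting this, $\mathbf 1\le (c_1 s^{n-1})^{-1}\mathcal A(H\cap Q)$ there, so the second portion is at most $n/c_1$ times the averaging bound. Adding, $\diam_\nu Q\le(2/\delta+2n/c_1)\diam f_\nu(Q)$, which is \eqref{cube0}.

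The crux, and the step I expect to require the most care, is the displayed geometric implication. Its content is that since $[x,y]$ has length $\ge s$ and runs through $Q$, a hyperplane nearly parallel to $u$ meeting it must stretch along a slight tilt of this long chord, and the fatness of the cube (inradius $s/2$) then forces the cross-section to capture a fixed proportion of $Q$. I would establish it first for $H$ exactly parallel to $u$, where $H$ contains a full chord of $Q$ and a direct computation bounds $\mathcal A(H\cap Q)$ below by a dimensional multiple of $s^{n-1}$, and then pass to the case $\sin\alpha(u,H)<\delta$ by a perturbation argument, shrinking $\delta(n)$ as needed. Once this lemma is in hand, the two analytic bounds are routine consequences of Lemma~\ref{basicdist}, and the dimensional constant $c$ is explicit.
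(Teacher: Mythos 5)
Your two analytic estimates are both correct: the transversality bound follows from \eqref{lemqc3a}, and the averaging bound for the coordinate oscillations against $\int_{\mathcal H}\mathcal A(H\cap Q)\,d\nu$ is a valid Fubini/Crofton-type argument. The proof nevertheless fails at the step you yourself identify as the crux: the ``key geometric fact'' is false, and so is the preliminary claim that every maximal chord of $Q$ has length at least $s$ (a chord clipping a corner can be arbitrarily short). For the geometric fact, take $n=2$, $Q=[0,1]^2$, and let the chord be the bottom edge, so $u=e_1$ and $s=1$. For any $\delta>0$ and any small $\epsilon>0$, the line $H$ through $(\epsilon,0)$ with slope $-\delta/2$ meets the chord and satisfies $\sin\alpha(u,H)<\delta$, yet $H\cap Q$ is a segment of length at most $2\epsilon$; no choice of $\delta(n)$, $c_1(n)$ survives this. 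In dimension $n\ge 3$ the fact fails even for $H$ exactly parallel to $u$, indeed containing the chord: in $[0,1]^3$ the plane $\{y+z=\epsilon\}$ contains the full unit chord $\{(t,\epsilon/2,\epsilon/2)\colon t\in[0,1]\}$, has normal orthogonal to $e_1$, and has cross-sectional area $\epsilon\sqrt 2$. The inradius heuristic breaks down because a long maximal chord need not pass through the fat middle of the cube: it can run along a face or an edge, and then hyperplanes nearly parallel to it cut thin slivers.

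Moreover, this is not a repairable defect of one auxiliary lemma but of the fixed-pair strategy itself. Concentrate most of the mass of $\nu$ on lines that cross $[0,1]\times\{0\}$ within distance $\epsilon$ of the corner $(0,0)$ at angle $\delta_0$ to the horizontal, with $\epsilon,\delta_0$ tiny (plus a small regular part so that Definition~\ref{Busmet} is satisfied). The pair $x=(0,0)$, $y=(1,0)$ then meets your selection criterion $\nu(\pi[x,y])\ge\tfrac12\diam_\nu Q$, but $\int_{\pi[x,y]}\sin\alpha(u,H)\,d\nu=O(\delta_0)$ and $\int_{\mathcal H}\mathcal A(H\cap Q)\,d\nu=O(\epsilon)$, while $\diam_\nu Q\approx 1$: the quantity you must bound is invisible to both of your estimates, however they are combined. (The lemma is of course still true for this $\nu$; the offending lines separate $(0,0)$ from $(0,1)$ nearly orthogonally.) This points to the needed fix, which is exactly what the paper does: do not fix the pair in advance. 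Projecting $Q$ onto $H^\perp$ shows that \emph{every} hyperplane meeting $Q$ separates some pair of vertices $\{x,y\}$ with $\sin\alpha(x-y,H)\ge 1/\sqrt n$; pigeonholing over the fewer than $4^n$ vertex pairs yields one pair with $\nu(S_{x,y})\ge 4^{-n}\nu(\pi Q)$, and \eqref{lemqc3a} applied to that pair gives \eqref{cube0} with $c=4^{-n}n^{-1/2}$, with no averaging bound needed.
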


 \begin{proof} Let $H$ be a hyperplane that intersects $Q$ and does not meet any of its vertices. 
Write $a$ for the edgelength of $Q$. Since $Q$ contains a ball of diameter $a$, the 
 projection of $Q$ onto the line $H^\perp$ has diameter at least $a$. This diameter is realized by projections
of two vertices that are separated by $H$; call them $x$ and $y$. Since $\abs{x-y}\le a\sqrt{n}$, it follows that 
\be{angle1}
\alpha(x-y,H) \ge \sin^{-1}(1/\sqrt{n}).
\ee 
For every pair $x,y$ of distinct vertices of $Q$, let $S_{xy}$ be the set of hyperplanes $H$ that separate
$x$ from $y$ and satisfy~\eqref{angle1}.  By the above, the union of $S_{x,y}$ over all such pairs $\{x,y\}$ is  $\pi Q$. Counting the number of pairs of vertices, we conclude that there exists a pair $\{x,y\}$ such that 
$\nu(S_{x,y})\ge 4^{-n} \nu(\pi Q)$. 
For such a pair, ~\eqref{lemqc3a} yields
\be{angle2}
\abs{f_\nu(x)-f_\nu(y)} \ge 4^{-n} n^{-1/2} \nu(\pi Q) \ge  4^{-n} n^{-1/2} \diam_\nu Q. \qedhere
\ee
\end{proof} 

\begin{definition}\label{qsdef} Let $\eta\colon [0,\infty)\to [0,\infty)$ be a homeomorphism, called a \emph{modulus of quasisymmetry} below. 
A topological embedding $f\colon X\to \R^n$ of a metric space $X$ into $\mathbb R^n$ is called $\eta$-\emph{quasisymmetric} if for every  triple of distinct points $a,b,x\in X$
\begin{equation}\label{defqs}
\abs{f(x)-f(a)}\le \eta(t)\abs{f(x)-f(b)} \qquad \text{where } \quad t =\frac{d_X(x,a)}{d_X(x,b)}.
\end{equation}
\end{definition}
When there is no need to emphasize the  modulus of quasisymmetry  $\eta$, we simply say that $f$ is quasisymmetric. 
Bi-Lipschitz maps are quasisymmetric but not conversely. The foundational facts about  quasisymmetric maps in  metric spaces are presented in~\cite{MR1800917}. 

In what follows we use standard notation $B(a,r)=\{x\in\R^n\colon \abs{x-a}<r\}$ and $\overline{B}(a,r)=\overline{B(a,r)}$. Unspecified multiplicative constants $C$ and $c$ are always positive, and may 
differ from one line to another.

\section{Uniform transversality}\label{transverse}

Informally, a measure $\nu$ on $\mathcal{H}$ is uniformly transverse if it not tightly concentrated on hyperplanes that are nearly parallel to some line. The precise statement follows.

\begin{definition}\label{utdef} Let $\nu$ and $\Omega$ be as in Definition~\ref{Busmet}. We say that $\nu$ is \emph{uniformly transverse} on $\Omega$ if there exists $\kappa>0$ such that 
\be{mt0}
 \int_{\pi[x,y]} \sin \alpha(x-y,H) \,d\nu(H) \ge \kappa\, \nu(\pi[x,y]) 
\ee
for all $x,y\in\Omega$.
\end{definition}

Some remarks are in order. When a line segment $[x,y]\subset   \Omega$ is divided into subsegments, both sides of ~\eqref{mt0} are additive with respect to such partition. Thus, it suffices to verify~\eqref{mt0} for sufficiently 
short segments. Also, $\eqref{mt0}$ is equivalent to the existence of $\tau>0$ such that  
\be{mt00}
\nu(\{ H\in \pi[x,y] \colon   \alpha(x-y,H) \ge \tau \}) \ge \tau \, \nu(\pi[x,y]). 
\ee
Indeed,~\eqref{mt00} obviously implies~\eqref{mt0} with $\kappa= \tau\sin \tau$. Conversely, if~\eqref{mt0} holds, then
 letting $\tau=\kappa/2$  we find that 
\[
\begin{split}
\kappa\,\nu(\pi[x,y]) &\le  \int_{\pi[x,y]} \sin \alpha(x-y,H) \,d\nu(H)  \\ 
&\le  \tau  \,\nu(\pi[x,y]) + \nu(\{ H\in \pi[x,y] \colon  \sin \alpha(x-y,H) \ge \tau \})                                 
\end{split}
\]
hence~\eqref{mt00} holds. 

When $\Omega=\R^n$ in Definition~\ref{utdef} we simply say that $\nu$ is uniformly transverse.  The following result relates uniform transversality to the quasisymmetry of the identity map. 

\begin{proposition}\label{idprop} If  the map $\id\colon (\Omega,d_\nu) \to (\Omega, d_e)$  is  locally $\eta$-quasisymmet\-ric, then $\nu$ is uniformly transverse.  
\end{proposition}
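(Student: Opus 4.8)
The plan is to argue by contradiction, using the equivalent formulation~\eqref{mt00} together with the additivity remark to reduce to arbitrarily short segments. Suppose $\nu$ is not uniformly transverse. Then for every $\tau>0$ there are $x,y\in\Omega$ with $\nu(\{H\in\pi[x,y]:\alpha(x-y,H)\ge\tau\})<\tau\,\nu(\pi[x,y])$; since both sides of~\eqref{mt00} are additive under subdivision of $[x,y]$, I may take the segment as short as I wish and, in particular, contained in a neighborhood on which $\id$ is $\eta$-quasisymmetric. Writing $m=\nu(\pi[x,y])$, $u=(y-x)/\abs{x-y}$, and $\ell=\abs{x-y}$, the hypothesis says that the \emph{bad} set $G=\{H\in\pi[x,y]:\alpha(u,H)<\tau\}$ of hyperplanes nearly parallel to the segment carries almost all the mass: $\nu(G)\ge(1-\tau)m$.

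The heart of the argument is to convert this angular concentration into a direction transverse to $u$ along which $d_\nu$ grows anomalously fast. Normalize $x=0$ and $u=e_1$. For a unit vector $\theta$ in the orthogonal complement $\{u\}^\perp$ and a radius $\rho>0$, I estimate $\nu(\pi[0,\rho\theta]\cap G)$ and average over $\theta\in S^{n-2}$. A hyperplane $H\in G$ passes through a point $t_He_1$ with $t_H\in[0,\ell]$ and has unit normal $n(H)=\sin\alpha_H\,(\pm e_1)+\cos\alpha_H\,w_H$ with $w_H\in\{u\}^\perp$; elementary geometry shows that $[0,\rho\theta]$ crosses $H$ precisely when $\inn{\theta,w_H}$ lies on one side of a threshold of size at most $\ell\sin\tau/(\rho\cos\tau)$. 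Choosing $\rho$ comparable to $\ell\sin\tau$ makes this threshold small, so for each $H\in G$ the set of admissible $\theta$ fills a dimensional fraction $c_n>0$ of $S^{n-2}$. By Fubini, $\int_{S^{n-2}}d_\nu(0,\rho\theta)\,d\theta\ge\int_{S^{n-2}}\nu(\pi[0,\rho\theta]\cap G)\,d\theta\ge c_n\abs{S^{n-2}}\,\nu(G)$, so some direction $\theta_0$ satisfies $d_\nu(0,\rho\theta_0)\ge c_n(1-\tau)m$.

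Now I apply quasisymmetry to the triple $x=0$, $a=y$, $b=\rho\theta_0$: since $d_\nu(x,a)=m$ and $d_\nu(x,b)\ge c_n(1-\tau)m$, the ratio $t=d_\nu(x,a)/d_\nu(x,b)$ is bounded by $1/(c_n(1-\tau))$, whence $\ell=\abs{x-a}\le\eta(t)\,\abs{x-b}\le\eta\big(1/(c_n(1-\tau))\big)\,\rho$. As $\rho$ is comparable to $\ell\sin\tau$, this reads $1\le C(\eta,n)\sin\tau$, which is false once $\tau$ is small enough. This contradiction proves uniform transversality with a constant $\kappa$ depending only on $\eta$ and $n$. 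I expect the main obstacle to be the averaging step: one must control the spherical-cap measure $\abs{\{\theta:\inn{\theta,w_H}>\lambda\}}$ uniformly in $w_H$ for the relevant small thresholds $\lambda$, and keep careful track of the two competing scales $\rho\asymp\ell\sin\tau\ll\ell$, so that the auxiliary point $\rho\theta_0$ stays inside the quasisymmetry neighborhood while remaining $d_\nu$-comparable to the full segment $[x,y]$.
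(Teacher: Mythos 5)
Your proof is correct, but it takes a genuinely different route from the paper's. The paper argues directly: for a short segment $[x,y]$ with $r=\abs{x-y}$, it places a cube $Q$ of edgelength $cr$ centered at $x$; quasisymmetry forces $\diam_\nu Q\le \frac{1}{2\cdot 4^n}\,d_\nu(x,y)$; a pigeonhole over pairs of vertices of $Q$ (the same device as in Lemma~\ref{cube}) converts this into $\nu(\pi Q)\le\frac12\nu(\pi[x,y])$; and since a hyperplane meeting $[x,y]$ at angle $\alpha$ passes within distance $r\sin\alpha$ of $x$, every $H\in\pi[x,y]\setminus\pi Q$ satisfies $\sin\alpha(x-y,H)\ge c/2$, which is exactly~\eqref{mt00}. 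You instead argue by contradiction: a hypothetical concentration of mass on nearly parallel hyperplanes is detected by a short transverse segment $[0,\rho\theta_0]$, $\rho\asymp\ell\sin\tau$, found by averaging the crossing criterion over $\theta\in S^{n-2}$ via Fubini, so that $d_\nu(0,\rho\theta_0)\ge c_n(1-\tau)\,d_\nu(x,y)$ while $\rho\ll\ell$, violating quasisymmetry. Structurally, your spherical average plays the role of the paper's vertex-pair pigeonhole, and your transverse segment plays the role of the cube; the paper's version is shorter and needs no averaging, while yours uses the hypothesis exactly in its stated (forward) direction and yields explicit constants. Two details to tighten, both routine: (i) choosing $\rho\asymp\ell\sin\tau$ does not make the threshold $\ell\sin\tau/(\rho\cos\tau)$ \emph{small} --- it only bounds it by $1/(K\cos\tau)$, where $K$ is the comparability constant, so you must fix $K$ large enough (say $\rho=4\ell\sin\tau$ and $\tau<\pi/3$) to push the threshold below $\tfrac12$ and retain a cap of definite measure; (ii) the locality reduction needs the standard covering argument: cover the original bad segment by finitely many quasisymmetry neighborhoods with margin, so that whichever short subsegment the additivity of~\eqref{mt00} hands you, both it and the auxiliary point $\rho\theta_0$ (at distance $\rho<\ell$ from $x$) lie in a single neighborhood.
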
 

\begin{proof} As observed above, it suffices to consider a short segment $[x,y]$. Let $r=\abs{x-y}$. The assumption of quasisymmetry implies that by taking sufficiently small $c=c(\eta,n)>0$, 
we can ensure that any cube $Q$ with center $x$ and edgelength $cr$ satisfies 
\be{idprop1}
\diam_\nu Q \le \frac1{2\cdot 4^{n}} \,d_\nu(x,y). 
\ee
Since $\nu$-almost every hyperplane crossing $Q$ separates a pair of its vertices (and there are $2^n$ vertices), it follows that there is a pair of vertices $u,v$ such that $d_\nu(u,v)\ge 4^{-n} \nu(\pi Q)$. Thus,~\eqref{idprop1} implies  
\be{idprop2}
\nu(\pi Q) \le \frac12  \nu(\pi [x,y]).  
\ee
For any hyperplane $H\in \pi[x,y]\setminus \pi Q$ the angle $\alpha(x-y,H)$ is bounded from below by a constant that depends only on $c$. Since the set of such hyperplanes has $\nu$-measure 
at least $\frac12  \nu(\pi [x,y])$, the claim follows. 
\end{proof}

For any $\nu$ as in Definition~\ref{Busmet}, the map $f_\nu$ is   monotone in the sense that $\inn{f_\nu(x)-f_\nu(y),x-y}\ge 0$ whenever $x\ne y$; this is a consequence of~\eqref{lemqc3}.  
In fact, it satisfies a stronger property defined below. 

\begin{definition}\label{cymon} Let $\Omega$ be a convex domain in $\R^n$. A  map $f\colon \Omega\to\R^n$ is called \emph{cyclically monotone} if 
\be{cymon1}
\sum_{k=1}^m \inn{f(x_k),x_{k+1}-x_k} \le 0 
\ee
holds for all $m\ge 2$ and all $x_1,\dots,x_m\in \Omega$. Here $x_{m+1}=x_1$. 
\end{definition}

Observe that for $m=2$ the inequality~\eqref{cymon1} amounts to monotonicity. The concept of cyclic monotonicity is motivated by the fact that cyclically monotone maps are precisely 
subsets of subgradients of convex functions~\cite{MR1451876}*{Theorem 24.8}. In particular, every continuous cyclically monotone map is the gradient of a $C^1$ convex function. 

\begin{proposition}\label{cylemma} The map $f_\nu$ in~\eqref{map1} is cyclically monotone.  
\end{proposition}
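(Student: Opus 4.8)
The plan is to realize $f_\nu$ as a selection of the subdifferential of an explicit convex potential, and then deduce \eqref{cymon1} by the standard telescoping argument (the elementary direction of Rockafellar's characterization). This route avoids any appeal to differentiability of the potential.

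First I would fix the basepoint $o$ and parametrize each hyperplane $H\in\mathcal H$ not passing through $o$ by its outward normal $n(H)$ together with the offset $p_H=\inn{z,n(H)}$ for $z\in H$. The orientation convention gives $\inn{o,n(H)}<p_H$, and $H$ meets the segment $[o,x]$ precisely when $\inn{x,n(H)}\ge p_H$. I would then introduce the candidate potential
\[
\Phi(x)=\int_{\mathcal H}\bigl(\inn{x,n(H)}-p_H\bigr)^+\,d\nu(H),\qquad x\in\Omega.
\]
For each fixed $H$ the integrand is a convex function of $x$ (the positive part of an affine function), so $\Phi$ is convex provided it is finite. Finiteness follows from the observation that $(\inn{x,n(H)}-p_H)^+$ vanishes unless $H\in\pi[o,x]$ and is then bounded by $\inn{x-o,n(H)}\le\abs{x-o}$, whence $\Phi(x)\le\abs{x-o}\,\nu(\pi[o,x])<\infty$ by the finiteness hypothesis in Definition~\ref{Busmet}.

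Next I would prove the subgradient inequality $\Phi(y)\ge\Phi(x)+\inn{f_\nu(x),y-x}$ for all $x,y\in\Omega$. Applying the elementary convexity estimate $(s-p)^+-(t-p)^+\ge \mathbf 1_{\{t>p\}}(s-t)$ with $s=\inn{y,n(H)}$ and $t=\inn{x,n(H)}$ and integrating in $H$, the resulting lower bound is $\int \mathbf 1_{\{\inn{x,n(H)}>p_H\}}\inn{y-x,n(H)}\,d\nu(H)$. Since hyperplanes through the single point $x$ (or through $o$) form $\nu$-null sets, the indicator here agrees with that of $\pi[o,x]$ up to a null set, so the integral equals $\inn{y-x,f_\nu(x)}$ by the very definition~\eqref{map1} of $f_\nu$. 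This is exactly the asserted subgradient inequality, i.e.\ $f_\nu(x)$ lies in the subdifferential $\partial\Phi(x)$.

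Finally, given $x_1,\dots,x_m$ with $x_{m+1}=x_1$, I would sum the inequalities $\Phi(x_{k+1})-\Phi(x_k)\ge\inn{f_\nu(x_k),x_{k+1}-x_k}$ over $k=1,\dots,m$; the left-hand side telescopes to $0$, yielding $\sum_{k=1}^m\inn{f_\nu(x_k),x_{k+1}-x_k}\le 0$, which is precisely~\eqref{cymon1}. The main obstacle is the measure-theoretic bookkeeping in the third step: one must verify that the pointwise subgradient of the integrand integrates back to $f_\nu(x)$ exactly, which hinges on identifying the level set $\{\inn{x,n(H)}>p_H\}$ with $\pi[o,x]$ modulo $\nu$-null sets and on the finiteness established above, so that the inner product may be pulled inside the integral.
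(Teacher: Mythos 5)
Your proof is correct, and it is organized differently from the paper's, although both share the same germ: for a fixed hyperplane $H=\{z\colon \inn{z,n(H)}=p_H\}$ with $p_H>\inn{o,n(H)}$, the integrand of~\eqref{map1}, namely $x\mapsto n(H)\mathbf{1}_{\pi[o,x]}(H)$, is a subgradient selection of the convex function $x\mapsto\max(p_H,\inn{x,n(H)})$. The paper exploits this per hyperplane: since the cyclic sum in~\eqref{cymon1} is additive (indeed linear) in $f$, it suffices to check the inequality for each integrand separately, and each integrand is cyclically monotone because it selects from a subdifferential; no global potential is ever assembled, so no integrability question arises. You instead integrate the potentials first, obtaining the explicit convex function $\Phi(x)=\int_{\mathcal H}\bigl(\inn{x,n(H)}-p_H\bigr)^+\,d\nu(H)$ with $f_\nu(x)\in\partial\Phi(x)$, and then telescope. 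Here your normalization by $-p_H$ (i.e., using $\max(p_H,\inn{x,n(H)})-p_H$ rather than the paper's unnormalized $\max$) is essential and correctly handled: $\nu$ may have infinite total mass (e.g., the Crofton measure), so only the normalized potentials are $\nu$-integrable, and your bound $\Phi(x)\le\abs{x-o}\,\nu(\pi[o,x])$ together with the identification of $\{H\colon\inn{x,n(H)}>p_H\}$ with $\pi[o,x]$ modulo $\nu$-null sets (valid since hyperplanes through $x$ or through $o$ are $\nu$-null by Definition~\ref{Busmet}) makes the subgradient computation legitimate. What your extra bookkeeping buys is an explicit global potential: the paper later, in the proof of Proposition~\ref{mtlem}, needs precisely the statement that $f_\nu$ is the gradient of a convex function, which it deduces abstractly from continuity plus cyclic monotonicity, whereas your $\Phi$ exhibits such a potential concretely; your argument is also self-contained in that it proves the easy direction of Rockafellar's characterization by telescoping instead of citing it.
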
 

\begin{proof} Since the inequality~\eqref{cymon1} is additive with respect to $f$, it suffices to verify  it for the integrand in~\eqref{map1}. Fix a hyperplane $H$ not passing through $0$.
It can be described by the equation $H=\{x\colon \inn{x,n(H)}=c\} $ for some $c>0$.  Let $g(x)=n(H)$ if $H$ separates $x$ from $0$, and $g(x)=0$ otherwise. 
The function $U(x) = \max(c, \inn{x,n(H)} )$ is convex and its subgradient $\partial U$ satisfies $g(x)\in \partial U(x)$ for every $x\in\mathbb R^n$. Therefore, $g$ is cyclically monotone, and so is $f_\nu$. 
\end{proof} 

Yet another concept of monotonicity comes into play when $\nu$ is uniformly transverse. 

\begin{definition}\label{dmon} Let $\Omega$ be a convex domain in $\R^n$. For a fixed $\delta>0$, a  map $f\colon \Omega\to\R^n$ is called $\delta$-monotone if 
\be{dmon1}
\inn{f(x)-f(y),x-y} \ge \delta \abs{f(x)-f(y)}\,\abs{x-y} 
\ee
holds for all $x,y\in \Omega$.
\end{definition}

Neither cyclic monotonicity nor $\delta$-monotonicity imply each other. 

\begin{proposition}\label{mtlem} The following are equivalent: 
\begin{enumerate}[(i)]
\item $\nu$ is uniformly transverse; 
\item $f_\nu$ is $\delta$-monotone;
\item $f_\nu$ is a locally $\eta$-quasisymmetric embedding of $(\Omega,d_e)$ into $\R^n$.
\end{enumerate} 
The equivalence is quantitative in the sense that the constants involved in each statement  depend only on one another and on the dimension $n$. 
\end{proposition}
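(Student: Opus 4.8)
The plan is to prove the cyclic chain of implications (i) $\Rightarrow$ (ii) $\Rightarrow$ (iii) $\Rightarrow$ (i), tracking at each arrow how the relevant constant is determined by the previous one, so that the asserted quantitative equivalence (dependence only on one another and on $n$) comes out automatically. The two identities of Lemma~\ref{basicdist} drive the passage between (i) and (ii); the noncollapsing estimate of Lemma~\ref{cube}, together with the known quasisymmetry of $\delta$-monotone maps, supplies the other two arrows.

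For (i) $\Rightarrow$ (ii) I would simply combine identity \eqref{lemqc3} with Definition~\ref{utdef} and the Lipschitz bound \eqref{Lip1}. Writing $r=\abs{x-y}$, the identity \eqref{lemqc3} reads $\inn{f_\nu(x)-f_\nu(y),x-y}=r\int_{\pi[x,y]}\sin\alpha(x-y,H)\,d\nu(H)$; uniform transversality bounds the integral below by $\kappa\,\nu(\pi[x,y])=\kappa\,d_\nu(x,y)$; and \eqref{Lip1} gives $d_\nu(x,y)\ge\abs{f_\nu(x)-f_\nu(y)}$. Chaining these yields \eqref{dmon1} with $\delta=\kappa$. (In passing, the same chain combined with \eqref{lemqc3a} shows that once $\nu$ is uniformly transverse, $f_\nu$ is in fact bi-Lipschitz from $(\Omega,d_\nu)$; this is precisely the mechanism behind Theorem~\ref{qsthm}.)

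The implication (ii) $\Rightarrow$ (iii) I would obtain by invoking the known fact that a $\delta$-monotone map of a convex domain is locally $\eta$-quasisymmetric, with $\eta$ depending only on $\delta$ and $n$ (see~\cite{MR2328811}). Since $f_\nu$ is an injective continuous map between domains in $\R^n$ (injectivity was noted after Lemma~\ref{basicdist}), it is a topological embedding by invariance of domain, so (iii) follows with the Euclidean metric on the source. This is the one step that rests on external machinery rather than on the objects constructed here.

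The substantive step is (iii) $\Rightarrow$ (i), and this is where I expect the real work to lie. Mirroring the proof of Proposition~\ref{idprop}, but now using the quasisymmetry of $f_\nu$ in place of that of the identity, together with Lemma~\ref{cube}, I would fix a short segment $[x,y]$, set $r=\abs{x-y}$, and consider the cube $Q$ centered at $x$ of edgelength $cr$ with $c$ small. Since every point of $Q$ lies within Euclidean distance $\tfrac{c\sqrt n}{2}r$ of $x$ while $\abs{x-y}=r$, the quasisymmetry inequality \eqref{defqs} (with the second point in $Q$ and the third equal to $y$) forces $\diam f_\nu(Q)\le 2\eta(\tfrac{c\sqrt n}{2})\abs{f_\nu(x)-f_\nu(y)}\le 2\eta(\tfrac{c\sqrt n}{2})\,\nu(\pi[x,y])$, the last step by \eqref{Lip1}. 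On the other hand, Lemma~\ref{cube} and the vertex-pair counting in its proof give $\diam f_\nu(Q)\ge c_0 4^{-n}\nu(\pi Q)$. Comparing the two bounds and choosing $c$ so small that $\eta(\tfrac{c\sqrt n}{2})$ is tiny (possible because $\eta(0)=0$) yields $\nu(\pi Q)\le\tfrac12\nu(\pi[x,y])$. Finally, any hyperplane $H$ crossing $[x,y]$ but missing $Q\supset B(x,cr/2)$ has $\dist(x,H)\ge cr/2$, and since $\dist(x,H)=t r\sin\alpha(x-y,H)$ for the crossing parameter $t\in[0,1]$, this forces $\sin\alpha(x-y,H)\ge c/2$; as these hyperplanes carry at least half of $\nu(\pi[x,y])$, integrating gives \eqref{mt0} with $\kappa=c/4$. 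The main obstacle is exactly this comparison: coordinating the quasisymmetric distortion of the small cube with the lower bound of Lemma~\ref{cube}, and then confirming that the residual crossing hyperplanes are uniformly transverse. Each ingredient is already available, so the difficulty is in the bookkeeping rather than in any new idea.
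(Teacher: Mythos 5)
Your proof is correct, but you close the equivalence by a genuinely different route than the paper. The paper argues (i)$\Leftrightarrow$(ii) directly from the identity \eqref{lemqc3}, gets (ii)$\Rightarrow$(iii) by citing \cite{MR2340234}*{Theorem 6}, and then proves (iii)$\Rightarrow$(ii) --- not (iii)$\Rightarrow$(i) --- by a convex-analytic step: $f_\nu$ is cyclically monotone (Proposition~\ref{cylemma}), hence the gradient of a $C^1$ convex function, and \cite{MR2340234}*{Lemma 18} states that a locally quasisymmetric gradient of a convex function is $\delta$-monotone. You instead prove (iii)$\Rightarrow$(i) by a self-contained geometric argument: quasisymmetric distortion of a small cube $Q$ centered at $x$, the noncollapsing bound $\diam f_\nu(Q)\ge 4^{-n}n^{-1/2}\,\nu(\pi Q)$ extracted from the proof (not just the statement) of Lemma~\ref{cube}, and the elementary fact that a hyperplane meeting $[x,y]$ but missing $B(x,cr/2)\subset Q$ has $\sin\alpha(x-y,H)\ge c/2$ --- in effect the mechanism of Proposition~\ref{idprop} transplanted from the identity map to $f_\nu$. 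Your steps and constants check out, including the reduction to short segments (justified, as you note, by additivity of both sides of \eqref{mt0}), which is what makes the merely local quasisymmetry in (iii) usable. Your route buys two things: it relies on only one external result ($\delta$-monotone $\Rightarrow$ locally quasisymmetric) instead of two, and it makes fully explicit the passage back to (i), which the paper treats very tersely --- the claim that \eqref{lemqc3} alone ``establishes'' (ii)$\Rightarrow$(i) really requires a lower bound of the form $\abs{f_\nu(x)-f_\nu(y)}\ge c\,\nu(\pi[x,y])$, and that is exactly what your cube argument supplies via (iii). The paper's route buys brevity and puts Proposition~\ref{cylemma} to use. One small repair: the external fact you invoke for (ii)$\Rightarrow$(iii) is \cite{MR2340234}*{Theorem 6}, not \cite{MR2328811}.
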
 

\begin{proof} 
The equivalence of (i) and (ii)  is established by the identify~\eqref{lemqc3}.  By~\cite{MR2340234}*{Theorem 6}, every $\delta$-monotone map is locally quasisymmetric; 
more precisely, there exists a modulus of quasisymmetry $\eta$ that depends only on $\delta$, such that $f$ is $\eta$-quasisymmetric in every ball $B(x,r)$ such that $B(x,2r)\subset \Omega$.  
This shows (ii)$\implies$(iii).  For the converse, observe that $f_\nu$ is continuous and cyclically monotone; therefore it can be written as the gradient of a differentiable convex function $u\colon\Omega\to\mathbb R$. 
By~\cite{MR2340234}*{Lemma 18}, if  the gradient of a convex function is locally $\eta$-quasisymmetric, it is $\delta$-monotone where $\delta$ depends only on $\eta$. This completes the proof. 
\end{proof}

By virtue of Proposition~\ref{idprop}, Theorem~\ref{qsthm} is a consequence of the following more precise statement. 

\begin{theorem}\label{precise} If $\nu$ is uniformly transverse on $\Omega$, then the map $f_\nu$ defined by  ~\eqref{map1} is a bi-Lipschitz embedding of $(\Omega,d_\nu)$ into $\R^n$. 
Furthermore, if $\Omega=\R^n$ then $f_\nu(\Omega)=\R^n$. 
\end{theorem}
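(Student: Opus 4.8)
The plan is to prove the two assertions separately; the embedding is essentially immediate from Lemma~\ref{basicdist}, while surjectivity is where the work lies.

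\emph{Bi-Lipschitz embedding.} The upper bound is already available: \eqref{Lip1} reads $\abs{f_\nu(x)-f_\nu(y)}\le\nu(\pi[x,y])=d_\nu(x,y)$, so $f_\nu$ is $1$-Lipschitz from $(\Omega,d_\nu)$ to $\R^n$. For the matching lower bound I would feed the uniform transversality inequality~\eqref{mt0} into~\eqref{lemqc3a}:
\[
\abs{f_\nu(x)-f_\nu(y)}\ge\int_{\pi[x,y]}\sin\alpha(x-y,H)\,d\nu(H)\ge\kappa\,\nu(\pi[x,y])=\kappa\,d_\nu(x,y).
\]
Hence $\kappa\,d_\nu(x,y)\le\abs{f_\nu(x)-f_\nu(y)}\le d_\nu(x,y)$ for all $x,y$, which is exactly bi-Lipschitz equivalence onto the image, with constant $\kappa$; this also re-proves injectivity.

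\emph{Surjectivity when $\Omega=\R^n$.} Here I would show that $f_\nu(\R^n)$ is simultaneously open and closed, hence all of $\R^n$ by connectedness. Openness is soft: $f_\nu$ is continuous (being $1$-Lipschitz for $d_\nu$, which induces the Euclidean topology) and injective, so by invariance of domain $f_\nu(\R^n)$ is open. For closedness it suffices to show $f_\nu$ is \emph{proper}, since a continuous proper map into $\R^n$ is closed. By the lower bound just proved, $\abs{f_\nu(x)}=\abs{f_\nu(x)-f_\nu(o)}\ge\kappa\,\nu(\pi[o,x])$, so properness — and with it the whole theorem — reduces to the divergence $\nu(\pi[o,x])\to\infty$ as $\abs{x}\to\infty$.

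This divergence is where uniform transversality is indispensable: without it a projective metric on $\R^n$ can be bounded (for instance $d\nu=g(p)\,d\theta\,dp$ on lines in $\R^2$ with $g$ integrable), and then $f_\nu$ has bounded image and is not onto. To organize the argument I would pass to the convex potential. By Proposition~\ref{cylemma} and the remark following Definition~\ref{cymon}, $f_\nu$ is the gradient of a $C^1$ convex function, which one may take (setting $o=0$) to be $u(x)=\int_{\mathcal H}(\inn{x,n(H)}-c_H)^{+}\,d\nu(H)$, where $H=\{z:\inn{z,n(H)}=c_H\}$ with $c_H>0$. A finite convex function on $\R^n$ has surjective gradient exactly when it is supercoercive, and for this $u$ monotone convergence gives, for every unit vector $\omega$,
\[
\lim_{R\to\infty}\frac{u(R\omega)}{R}=\int_{\mathcal H}(\inn{\omega,n(H)})^{+}\,d\nu(H)=\int_{\pi\rho_\omega}\sin\alpha(\omega,H)\,d\nu(H),
\]
where $\rho_\omega$ is the ray from $o$ in direction $\omega$. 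Since~\eqref{mt0} bounds the last integral below by $\kappa\,\nu(\pi\rho_\omega)$, surjectivity reduces to the single geometric statement that every ray has infinite $d_\nu$-length, i.e. $\nu(\pi\rho_\omega)=\infty$ for all $\omega$.

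I expect this last statement to be the main obstacle, and I would attack its contrapositive. If some ray $\rho=\rho_\omega$ had $\nu(\pi\rho)=L<\infty$, then the masses $\nu(\pi[o+N\omega,\,o+(N+1)\omega])$ would be summable and hence tend to $0$, so hyperplanes crossing $\rho$ far out become scarce. The mechanism I would exploit is that under finite total mass these far crossings must concentrate on hyperplanes \emph{nearly parallel} to $\rho$: a hyperplane crossing $\rho$ at large parameter while making a definite angle with $\omega$ recedes to distance $\to\infty$ from $o$, and summing over the disjoint far subsegments shows that the $\nu$-mass of such transverse crossings is dominated by the finite quantity $\nu(\{H:\inn{\omega,n(H)}\ge\delta\})$, forcing the transverse fraction of $\nu(\pi[o+N\omega,o+(N+1)\omega])$ below $\kappa$ for large $N$ and contradicting the equivalent form~\eqref{mt00}. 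The delicate point I would have to treat with care is making this concentration estimate quantitative — in particular excluding the borderline regime in which the transverse mass and the total mass decay at the same rate along $\rho$, which seems to require invoking uniform transversality simultaneously for the short segments transverse to $\omega$ near the far part of $\rho$, so that the angular spread forced by~\eqref{mt00} at each scale is incompatible with the finiteness of $L$.
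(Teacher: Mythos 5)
The bi-Lipschitz half of your proposal is correct and coincides with the paper's argument: \eqref{Lip1} gives the Lipschitz bound and \eqref{lemqc3a} together with \eqref{mt0} gives the lower bound. The gap is in the surjectivity half. Your reduction itself is sound: the image is open by invariance of domain, and properness would follow from supercoercivity of the convex potential, which by your monotone-convergence computation and \eqref{mt0} follows from $\nu(\pi\rho_\omega)=\infty$ for every ray. But observe that, with the bi-Lipschitz estimates already in hand, this lemma is \emph{equivalent} to the surjectivity being proved: if $f_\nu(\R^n)=\R^n$ then $f_\nu$ is a homeomorphism of $\R^n$, hence proper, hence $d_\nu(o,x)\ge\abs{f_\nu(x)-f_\nu(o)}\to\infty$ along every ray. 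So nothing has been discharged -- the entire difficulty has been repackaged into a lemma that you do not prove, only sketch, and the ``borderline regime'' you flag is not a technicality but exactly where the content lies. The condition \eqref{mt00} is a ratio condition, insensitive to multiplying all masses by small constants; the summability of $\nu(\pi[o+N\omega,o+(N+1)\omega])$ therefore yields no contradiction, since the transverse fraction can sit at exactly $\tau$ while all masses decay. Nor does invoking transversality of segments perpendicular to $\rho_\omega$ rescue the argument in the way you suggest: a hyperplane nearly parallel to $\rho_\omega$ is (in the plane, automatically) nearly orthogonal to such segments, so far-field concentration on nearly parallel hyperplanes is \emph{compatible} with, not contradicted by, the transversality constraints in the perpendicular directions. (Also, the quantity $\nu(\{H:\inn{\omega,n(H)}\ge\delta\})$ you call finite is in general infinite, e.g.\ for the Crofton measure.)

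There is structural evidence that no segmentwise argument of this kind can close the gap. In dimension $1$ (excluded by the paper's standing assumption $n\ge2$) every hyperplane is a point meeting every segment orthogonally, so uniform transversality holds trivially with $\kappa=1$ for any admissible measure; yet a finite measure such as the Gaussian gives a bounded metric $d_\nu$ and $f_\nu(\R)$ a bounded interval, so the ray lemma is simply false there. Any correct proof must therefore use $n\ge2$ in an essential, global way, and this is precisely what the paper imports: by Proposition~\ref{mtlem}, uniform transversality makes $f_\nu$ $\delta$-monotone and hence locally quasisymmetric, so $f_\nu$ is quasiconformal, and then V\"ais\"al\"a's theorem that $\R^n$, $n\ge2$, admits no quasiconformal map onto a proper subdomain (\cite{MR0454009}*{Theorem 17.4}, proved by modulus estimates) gives $f_\nu(\Omega)=\R^n$. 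That theorem is exactly equivalent to the properness you are after, so your proposal defers the whole problem into the unproved lemma. The shortest repair is to replace your final step by this quasiconformality argument, at which point the properness reduction, while a nice reformulation, becomes unnecessary.
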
 

\begin{proof}[Proof of Theorem~\ref{precise}] 
From~\eqref{Lip1} we see that $f_\nu$ is Lipschitz. The reverse inequality $\abs{f_\nu(x)-f_\nu(y)}\ge c\,d_\nu(x,y)$  follows by combining ~\eqref{lemqc3a} and ~\eqref{mt0}. 

By Proposition~\ref{mtlem} $f_\nu$ is a locally $\eta$-quasisymmetric embedding of $(\Omega,d_e)$ into $\R^n$, therefore it is a \emph{quasiconformal map} (e.g., ~\cite{MR1800917}). 
 It is well-known that $\R^n$ cannot be quasiconformally mapped to its proper subdomain~\cite{MR0454009}*{Theorem 17.4}. Thus, in the case  $\Omega=\R^n$ we have $f_\nu(\Omega)=\R^n$.  
\end{proof} 

\section{Examples}\label{examples}
 
A convenient way to introduce measures on  the space of hyperplanes $\mathcal{H}$ is to push them forward from a space where it is easier to construct measures. 
For example, there is a natural surjection $\R^n\times S^{n-1}\to \mathcal{H}$ given by $\Phi(a,v)=\{x\colon \inn{x,v}=\inn{a,v}\}$. Let $\omega$ be the normalized volume measure on $S^{n-1}$. 
For a Radon measure $\mu$ on $\R^n$ the pushforward $\Phi_*(\mu\times \omega)$  is a measure on $\mathcal{H}$. 

\begin{example}\label{KMW}  Let $\mu$ be a non-atomic measure (meaning $\mu(\{x\})=0$ for every $x$) such that the support of $\mu$ is not contained in any line. If
\be{tail1}
0<\int_{\R^n} \abs{x}^{-1} \,d\mu(x) <\infty,
\ee
then $\Phi_*(\mu\times \omega)$ satisfies the assumptions of Definition~\ref{Busmet}. If, in addition, $\mu$ is a doubling measure, then $\Phi_*(\mu\times \omega)$ is uniformly transverse. 
\end{example} 

Recall that a measure $\mu$ is \emph{doubling} if there exists a constant $C$ such that $\mu(B(x,r))\le C\mu(B(x,r))$ for all $x\in \R^n$ and all $r>0$. 

\begin{proof} For every $x\in\R^n$ and $r>0$ we have 
\[
\nu(\pi B(x,r)) \le \mu(B(x,r)) + C\int_{\R^n} \frac{r}{\abs{x-y}} \,d\mu(y)
\]
which implies that the first and third conditions in Definition~\ref{Busmet} hold. The second condition, $\nu(\pi[x,y])>0$, follows from the support of $\mu$ not being contained in the line through $x$ and $y$.   

Suppose $\mu$ is doubling. Fix distinct points $x$ and $y$ and let $r=\abs{x-y}$. Also fix a unit vector $w$ that is orthogonal to $x-y$. 
For $k=1,2,\dots$ let $\mu_k$ be the restriction of $\mu$ to the spherical shell $A_k=B(x,2^{k+1}r)\setminus B(x, 2^k r)$.
This shell contains the open ball $B_k=B(x+3\cdot 2^{k-1}w , 2^{k-1})$. The doubling condition implies that 
$\mu (B_k) \ge c  \mu(A_k)$ with $c$ independent of $k$. It is geometrically evident that  every hyperplane $H$ that meets both $[x,y]$ and  $B_k$ satisfies $\alpha(x-y,H) \ge \pi/4$.  
Thus, the measure $\nu_k = \Phi_*(\mu_k\times \omega)$ satisfies 
\be{KMW5}
 \int_{\pi[x,y]} \sin \alpha(x-y,H) \,d\nu_k(H) \ge c\, \nu_k(\pi[x,y]) 
\ee
  with $c$ independent of $k$.  Observe also that the restriction of $\mu$ to $B(x,2r)$, which is not included in any $\mu_k$, is comparable in mass to $\mu_1$; thus its contribution to $\nu(\pi[x,y])$ is controlled by 
~\eqref{KMW5} with $k=1$. Summing  over $k$, we conclude that $\nu$ is uniformly transverse. 
\end{proof} 

When $\nu=\Phi_*(\mu\times \omega)$, the formula~\eqref{map1} yields
\be{KMW3}
f_\nu(x) =c  \int_{\R^n} \left(\frac{x-y}{\abs{x-y}} + \frac{y-o}{\abs{y-o}}\right) \,d\mu(y)
\ee
with some constant factor $c>0$. Indeed, it suffices to verify~\eqref{KMW3} for a unit point mass $\delta_a$ because general measures can be approximated by  linear 
combinations of point masses. In turn, $\delta_a$ is the limit of normalized restrictions of the Lebesgue measure to  $B(a,r)$ as $r\to 0$. 
If the basepoint $o$ in the definition of $f_\nu$ coincides with $a$, a symmetry consideration yields
\[
\int_{\pi[0,x]} n(H)\,d\nu(H) = c\, \frac{x}{\|x\|},\quad \abs{x}>r 
\]
with $c$ independent of $x$ or $r$. Changing the basepoint    $o$ contributes additive constants to $f_\nu$ and to the right side of~\eqref{KMW3}. 
Since both sides of~\eqref{KMW3} turn to $0$ when $x=o$,  the additive constants agree. This proves ~\eqref{KMW3}.

The integral~\eqref{KMW3}  was used in ~\cite{MR2328811} to construct quasiconformal maps from doubling measures.
Thus, Example~\ref{KMW} shows that the results of \S\ref{transverse} recover some of the main results of~\cite{MR2328811}. 

Beurling and Ahlfors~\cite{MR0086869} proved that every quasisymmetric self-map of $\R$ extends to a quasisymmetric self-map of $\R^2$. Up to orientation, quasisymmetric maps on a line are precisely indefinite integrals of 
doubling measures of $\R$. The following proposition shows that the Beurling-Ahlfors extension can be obtained from Theorem~\ref{precise}.

\begin{example}\label{BA}  Let $\mu$ be a doubling measure on the real axis $\R$ of the complex plane $\C\approx \R^2$. Denote by $\widetilde{\omega}$   the restriction of the arclength measure on $S^{1}$
to the set of unit vectors $(v_1,v_2)$ such that $v_2 \ge \frac{\sqrt{3}}{2}$.  Let $\nu=\Phi_*(\mu\times \widetilde{\omega})$ with $\Phi$ as above. Then $\nu$ is uniformly transverse. 

Moreover, 
$f_\nu\colon  \R^2\to\R^2$ is a quasiconformal map such that $f_\nu(\R)  =  \R$ and for all $s,t\in\R$, $s<t$,  we have $ f_\nu(t)-f_\nu(s)=\mu([s,t])$. 
\end{example} 

Observe that every line $H$ in the support of $\nu$ crosses $\R$ at an angle of at least $\pi/3$. 

\begin{proof} Fix two distinct points $x$ and $y$. 
If the angle that $x-y$ forms with the real axis is less than $\pi/4$, the uniform transversality condition holds for the segment $[x,y]$ by the construction of $\nu$. 
Suppose that this angle is at least $\pi/4$. By partitioning the segment $[x,y]$, we may assume that $\dist([x,y],\R)\ge  \abs{x-y}$.
Also without loss of generality, $\dist(y,\R)>\dist(x,\R)$.

Let $I$ be the segment on $\R$ formed by the intersection points of $\R$ with the lines that meet $[x,y]$ at an angle less than $\pi/12$. 
Note that $I$ is the base of a triangle with vertex $y$ in which the angle at $y$ is $\pi/6$ and the segment $[x,y]$ bisects this angle. 
Let $p$ be the nearest endpoint of $I$  to $y$; if the endpoints are equidistant from $y$ (i.e., $[x,y]$ is vertical), pick either one. Let $I'\subset\R$ be the segment of the same length as $I$ and such that $I\cap I'=\{p\}$. 
The doubling condition implies $\mu(I')\ge c\mu(I)$. 
It follows that the restriction of $\mu$ to $I'$ is responsible for a certain fraction of $\nu(\pi [x,y])$; and since the lines that intersect both $I'$ and $[x,y]$  form the angle 
of at least $\pi/12$ with the latter, the measure $\nu$ is uniformly transverse. 

The quasiconformality of $f_\nu$ follows from Theorem~\ref{precise}. The fact that $f_\nu(\R)=\R$ is a consequence of the symmetry of $\nu$: reflection of the plane across the real axis leaves $\nu$ invariant. 
Finally, for real $s<t$ the definition of $f_\nu$ yields 
\[
f_\nu(t)-f_\nu(s) = \int_s^t \int_{-\pi/6}^{\pi/6} \cos\theta \, d\theta \, d\mu = \mu([s,t]).  \qedhere
\]
\end{proof} 

\section{Concluding remarks}\label{remarks} 

Theorem~\ref{qsthm} leads to several natural questions. The main result of Pogorelov's book~\cite{MR550440} is that sufficiently smooth projective metrics on $\R^3$ can be obtained as $d_\nu$ with $\nu$ being a \emph{signed}
measure on $\mathcal{H}$. Szab\'o~\cite{MR835025} extended this result to all dimensions.  Although the definition of our map $f_\nu$ makes sense when $\nu$ is a  signed measure,   all  results  of this paper rely on $\nu$ being positive. 

\begin{question}
Can Theorem~\ref{qsthm} be extended to signed measures $\nu$ that generate positive metrics $d_\nu$? 
\end{question} 

A well-known \emph{necessary} condition for a metric space $X$ to have a bi-Lipschitz embedding into a Euclidean space is that $X$ is doubling, but this 
 condition is not sufficient in general~\cite{MR1800917}.  A projective metric need not be doubling. For example, the Beltrami-Klein model of the hyperbolic space is a non-doubling projective metric on the unit ball
of $\R^n$, since the hyperbolic space fails the doubling condition.  More generally, Hilbert geometries on convex domains are typically Gromov hyperbolic~\cites{MR2010741,MR1923418}. 

\begin{question}
Does every doubling projective metric on a convex domain $\Omega\subseteq  \R^n$ admit a bi-Lipschitz embedding into some $\R^N$? Or even into $\R^n$? 
\end{question} 

\begin{bibdiv}
\begin{biblist}

\bib{MR490820}{article}{
   author={Alexander, Ralph},
   title={Planes for which the lines are the shortest paths between points},
   journal={Illinois J. Math.},
   volume={22},
   date={1978},
   number={2},
   pages={177--190},
   issn={0019-2082},
   review={\MR{490820 (82d:53042)}},
}

\bib{MR2027175}{article}{
   author={{\'A}lvarez Paiva, Juan C.},
   title={Hilbert's fourth problem in two dimensions},
   conference={
      title={MASS selecta},
   },
   book={
      publisher={Amer. Math. Soc.},
      place={Providence, RI},
   },
   date={2003},
   pages={165--183},
   review={\MR{2027175 (2004k:52002)}},
}

\bib{MR0372939}{article}{
   author={Ambartzumian, R. V.},
   title={Combinatorial solution of the Buffon-Sylvester problem},
   journal={Z. Wahrscheinlichkeitstheorie und Verw. Gebiete},
   volume={29},
   date={1974},
   pages={25--31},
   review={\MR{0372939 (51 \#9143)}},
}

\bib{MR2010741}{article}{
   author={Benoist, Yves},
   title={Convexes hyperboliques et fonctions quasisym\'etriques},
   language={French, with English summary},
   journal={Publ. Math. Inst. Hautes \'Etudes Sci.},
   number={97},
   date={2003},
   pages={181--237},
   issn={0073-8301},
   review={\MR{2010741 (2005g:53066)}},
   doi={10.1007/s10240-003-0012-4},
}

\bib{MR0086869}{article}{
   author={Beurling, A.},
   author={Ahlfors, L.},
   title={The boundary correspondence under quasiconformal mappings},
   journal={Acta Math.},
   volume={96},
   date={1956},
   pages={125--142},
   issn={0001-5962},
   review={\MR{0086869 (19,258c)}},
}

\bib{MR0143155}{article}{
   author={Busemann, Herbert},
   title={Geometries in which the planes minimize area},
   journal={Ann. Mat. Pura Appl. (4)},
   volume={55},
   date={1961},
   pages={171--189},
   issn={0003-4622},
   review={\MR{0143155 (26 \#715)}},
}

\bib{MR0430935}{article}{
   author={Busemann, Herbert},
   title={Problem IV: Desarguesian spaces},
   conference={
      title={Mathematical developments arising from Hilbert problems (Proc.
      Sympos. Pure Math., Northern Illinois Univ., De Kalb, Ill., 1974)},
   },
   book={
      publisher={Amer. Math. Soc.},
      place={Providence, R. I.},
   },
   date={1976},
   pages={131--141. Proc. Sympos. Pure Math., Vol. XXVIII},
   review={\MR{0430935 (55 \#3940)}},
}

\bib{MR0054980}{book}{
   author={Busemann, Herbert},
   author={Kelly, Paul J.},
   title={Projective geometry and projective metrics},
   publisher={Academic Press Inc.},
   place={New York, N. Y.},
   date={1953},
   pages={viii+332},
   review={\MR{0054980 (14,1008e)}},
}

\bib{MR1800917}{book}{
   author={Heinonen, Juha},
   title={Lectures on analysis on metric spaces},
   series={Universitext},
   publisher={Springer-Verlag},
   place={New York},
   date={2001},
   pages={x+140},
   isbn={0-387-95104-0},
   review={\MR{1800917 (2002c:30028)}},
   doi={10.1007/978-1-4613-0131-8},
}

\bib{MR1924353}{article}{
   author={Laakso, Tomi J.},
   title={Plane with $A_\infty$-weighted metric not bi-Lipschitz
   embeddable to ${\mathbb{R}}^N$},
   journal={Bull. London Math. Soc.},
   volume={34},
   date={2002},
   number={6},
   pages={667--676},
   issn={0024-6093},
   review={\MR{1924353 (2003h:30029)}},
   doi={10.1112/S0024609302001200},
}

\bib{MR1923418}{article}{
   author={Karlsson, Anders},
   author={Noskov, Guennadi A.},
   title={The Hilbert metric and Gromov hyperbolicity},
   journal={Enseign. Math. (2)},
   volume={48},
   date={2002},
   number={1-2},
   pages={73--89},
   issn={0013-8584},
   review={\MR{1923418 (2003f:53061)}},
}

\bib{MR2340234}{article}{
   author={Kovalev, Leonid V.},
   title={Quasiconformal geometry of monotone mappings},
   journal={J. Lond. Math. Soc. (2)},
   volume={75},
   date={2007},
   number={2},
   pages={391--408},
   issn={0024-6107},
   review={\MR{2340234 (2008e:47121)}},
   doi={10.1112/jlms/jdm008},
}

\bib{MR2328811}{article}{
   author={Kovalev, Leonid},
   author={Maldonado, Diego},
   author={Wu, Jang-Mei},
   title={Doubling measures, monotonicity, and quasiconformality},
   journal={Math. Z.},
   volume={257},
   date={2007},
   number={3},
   pages={525--545},
   issn={0025-5874},
   review={\MR{2328811 (2008e:30030)}},
   doi={10.1007/s00209-007-0132-5},
}

\bib{Papa}{article}{
   author={Papadopoulos, Athanase},
   title={On Hilbert's fourth problem},
   eprint={arXiv:1312.3172},
}

\bib{MR550440}{book}{
   author={Pogorelov, Aleksei Vasil{\cprime}evich},
   title={Hilbert's fourth problem},
   note={Translated by Richard A. Silverman;
   Scripta Series in Mathematics},
   publisher={V. H. Winston \& Sons},
   place={Washington, D.C.},
   date={1979},
   pages={vi+97},
   isbn={0-470-26735-6},
   review={\MR{550440 (80j:53066)}},
}

\bib{MR1451876}{book}{
   author={Rockafellar, R. Tyrrell},
   title={Convex analysis},
   series={Princeton Landmarks in Mathematics},
   note={Reprint of the 1970 original;
   Princeton Paperbacks},
   publisher={Princeton University Press, Princeton, NJ},
   date={1997},
   pages={xviii+451},
   isbn={0-691-01586-4},
   review={\MR{1451876 (97m:49001)}},
}

\bib{MR2455326}{book}{
   author={Schneider, Rolf},
   author={Weil, Wolfgang},
   title={Stochastic and integral geometry},
   series={Probability and its Applications (New York)},
   publisher={Springer-Verlag},
   place={Berlin},
   date={2008},
   pages={xii+693},
   isbn={978-3-540-78858-4},
   review={\MR{2455326 (2010g:60002)}},
   doi={10.1007/978-3-540-78859-1},
}

\bib{MR1402671}{article}{
   author={Semmes, Stephen},
   title={On the nonexistence of bi-Lipschitz parameterizations and
   geometric problems about $A_\infty$-weights},
   journal={Rev. Mat. Iberoamericana},
   volume={12},
   date={1996},
   number={2},
   pages={337--410},
   issn={0213-2230},
   review={\MR{1402671 (97e:30040)}},
   doi={10.4171/RMI/201},
}

\bib{MR835025}{article}{
   author={Szab{\'o}, Z. I.},
   title={Hilbert's fourth problem. I},
   journal={Adv. in Math.},
   volume={59},
   date={1986},
   number={3},
   pages={185--301},
   issn={0001-8708},
   review={\MR{835025 (88f:53113)}},
   doi={10.1016/0001-8708(86)90056-3},
}

\bib{MR0454009}{book}{
   author={V{\"a}is{\"a}l{\"a}, Jussi},
   title={Lectures on $n$-dimensional quasiconformal mappings},
   series={Lecture Notes in Mathematics, Vol. 229},
   publisher={Springer-Verlag},
   place={Berlin},
   date={1971},
   pages={xiv+144},
   review={\MR{0454009 (56 \#12260)}},
}

\end{biblist}
\end{bibdiv}

\end{document}